\declaretheorem[name=Example,qed={\lower-0.3ex\hbox{$\triangle$}}]{example}
\newtheorem{theorem}{Theorem}[section]
\newtheorem{lemma}[theorem]{Lemma}
\newtheorem{proposition}[theorem]{Proposition}
\newtheorem{definition}[theorem]{Definition}
\newtheorem{remark}{Remark}
\newcommand{\bfA}{\mathbf{A}}
\newcommand{\bfb}{\mathbf{b}}
\newcommand{\bfc}{\mathbf{c}}
\newcommand{\bft}{\mathbf{t}}
\newcommand{\bfm}{\mathbf{m}}
\newcommand{\bff}{\mathbf{f}}
\newcommand{\bfp}{\mathbf{p}}
\newcommand{\bfu}{\mathbf{u}}
\newcommand{\bfv}{\mathbf{v}}
\newcommand{\x}{\mathbf{x}}
\newcommand{\z}{\mathbf{z}}
\newcommand{\rr}{{\mathbb R}}
\newcommand{\CC}{{\mathbb C}}
\newcommand{\VV}{{\mathbb V}}
\newcommand{\aug}{{\rm aug}}
\begin{document}

\title{The Method of Gauss-Newton to Compute Power Series Solutions
of Polynomial Homotopies\thanks{This material is based upon work 
supported by the National Science Foundation under Grant No. 1440534.
Date: 25 October 2017.}}

\renewcommand{\thefootnote}{\fnsymbol{footnote}} 
\footnotetext{\emph{Key words and phrases:} Linearization, Gauss-Newton, Hermite interpolation, polynomial
homotopy, power series, Puiseux series.}
\renewcommand{\thefootnote}{\arabic{footnote}} 

\author{Nathan Bliss \and Jan Verschelde}

\date{University of Illinois at Chicago \\
Department of Mathematics, Statistics, and Computer Science \\
851 S. Morgan Street (m/c 249), Chicago, IL 60607-7045, USA \\
{\tt {\{nbliss2,janv\}@uic.edu}}}

\maketitle

\begin{abstract}
We consider the extension of the method of Gauss-Newton from
complex floating-point arithmetic to the field of truncated
power series with complex floating-point coefficients.
With linearization we formulate a linear system where the
coefficient matrix is a series with matrix coefficients, and
provide a characterization for when the matrix series is regular
based on the algebraic variety of an augmented system.
The structure of the linear system leads to a block triangular system.
In the regular case, solving the linear system is equivalent 
to solving a Hermite interpolation problem.
We show that this solution has cost cubic in the problem size.
In general, at singular points, we rely on methods of tropical
algebraic geometry to compute Puiseux series.
With a few illustrative examples, we demonstrate the
application to polynomial homotopy continuation.
\end{abstract}



\section{Introduction}
\subsection{Preliminaries}

A polynomial homotopy is a family of polynomial systems
which depend on one parameter.  Numerical continuation methods
to track solution paths defined by a homotopy are classical,
see e.g.:~\cite{AG03} and~\cite{Mor87}.
Studies of deformation methods in symbolic computation
appeared in~\cite{BMWW04}, \cite{CPHM01}, and~\cite{HKPSW00}.
In particular, the application of Pad{\'{e}} approximants 
in~\cite{JMSW09} stimulated our development of methods to
compute power series.

\noindent {\bf Problem statement.}  We want to define an efficient,
numerically stable, and robust algorithm to compute a power series 
expansion for a solution curve of a polynomial homotopy.  
The input is a list of polynomials in several variables,
where one of the variables is a parameter denoted by $t$, 
and a value of $t$ near which information is desired.
The output of the algorithm is a tuple of series in $t$ that vanish up to a certain
degree when plugged in to either the original equations or, in special cases, a
transformation of the original equations.

A power series for a solution curve forms the input to the computation
of a Pad{\'{e}} approximant for the solution curve, which will then
provide a more accurate predictor in numerical path trackers.
Polynomial homotopies define deformations of polynomial systems
starting at generic instances and moving to specific instances.
Tracking solution paths that start at singular solutions is not
supported by current numerical polynomial homotopy software systems.
At singular points we encounter series with fractional powers,
Puiseux series.

\noindent {\bf Background and related work.}  As pointed out in~\cite{BM01},
polynomials, power series, and Toeplitz matrices are closely related.  A direct
method to solve block banded Toeplitz systems is presented in~\cite{CV10}.  The
book~\cite{BG96} is a general reference for methods related to approximations
and power series.  We found inspiration for the relationship between
higher-order Newton-Raphson iterations and Hermite interpolation in~\cite{KT74}.
The computation of power series is a classical topic in computer
algebra~\cite{GCL92}.  In~\cite{ACGS04}, new algorithms are proposed 
to manipulate polynomials by values via Lagrange interpolation.

The Puiseux series field is one of the building blocks of tropical algebraic
geometry~\cite{MS15}.  For the leading terms of the Puiseux series,
we rely on tropical methods~\cite{BJSST07}, and in particular on the 
constructive proof of the fundamental theorem of tropical algebraic 
geometry~\cite{JMM08}, see also~\cite{Kat08} and~\cite{Pay09}.
Computer algebra methods for Puiseux series in two dimensions
can be found in~\cite{PR12}.

\noindent {\bf Our contributions.}  Via linearization, rewriting matrices of
series into series with matrix coefficients, we formulate the problem of
computing the updates in Newton's method as a block structured linear algebra
problem.  For matrix series where the leading coefficient is regular, the
solution of the block linear system satisfies the Hermite interpolation problem.
For general matrix series, where several of the leading matrix coefficients may
be rank deficient, Hermite-Laurent interpolation applies. We characterize when
these cases occur using the algebraic variety of an augmented system. To solve the block
diagonal linear system, we propose to reduce the coefficient matrix to a lower
triangular echelon form, and we provide a brief analysis of its cost.

The source code for the algorithm presented in this paper is
archived at github via our accounts {\tt nbliss} and {\tt janverschelde}.

\noindent {\bf Acknowledgments.} We thank the organizers of the ILAS 2016
minisymposium on Multivariate Polynomial Computations and Polynomial Systems,
Bernard Mourrain, Vanni Noferini, and Marc Van Barel, for giving the second
author the opportunity to present this work.  In addition, we are grateful to
the anonymous referee who supplied many helpful remarks.

\subsection{Motivating Example: Pad\'e Approximant}\label{introPade}
One motivation for finding a series solution is that
once it is obtained, 
one can directly compute the associated Pad\'e
approximant, which often has much better convergence properties.
Pad{\'e} approximants~\cite{BG96} are applied in 
symbolic deformation algorithms~\cite{JMSW09}.
In this section we reproduce~\cite[Figure~1.1.1]{BG96}
in the context of polynomial homotopy continuation.
Consider the homotopy
\begin{equation}
   (1-t) (x^2 - 1) + t (3 x^2 - 3/2) = 0.
\end{equation}
The function
$\displaystyle x(t) = \left( \frac{1+t/2}{1+2t} \right)^{1/2}$
is a solution of this homotopy.

Its second order Taylor series at $t = 0$ is
$s(t) = 1 - 3 t/4 + 39 t^2/32 + O(t^2)$.
The Pad\'{e} approximant of degree one in numerator and
denominator is $\displaystyle q(t) = \frac{1 + 7t/8}{1 + 13t/8}$.
In Figure~\ref{figex4pade} we see that the series approximates
the function only in a small interval and then diverges,
whereas the Pad{\'{e}} approximant is more accurate.

\begin{figure}[hbt]
\begin{center}
\includegraphics[scale=0.7]{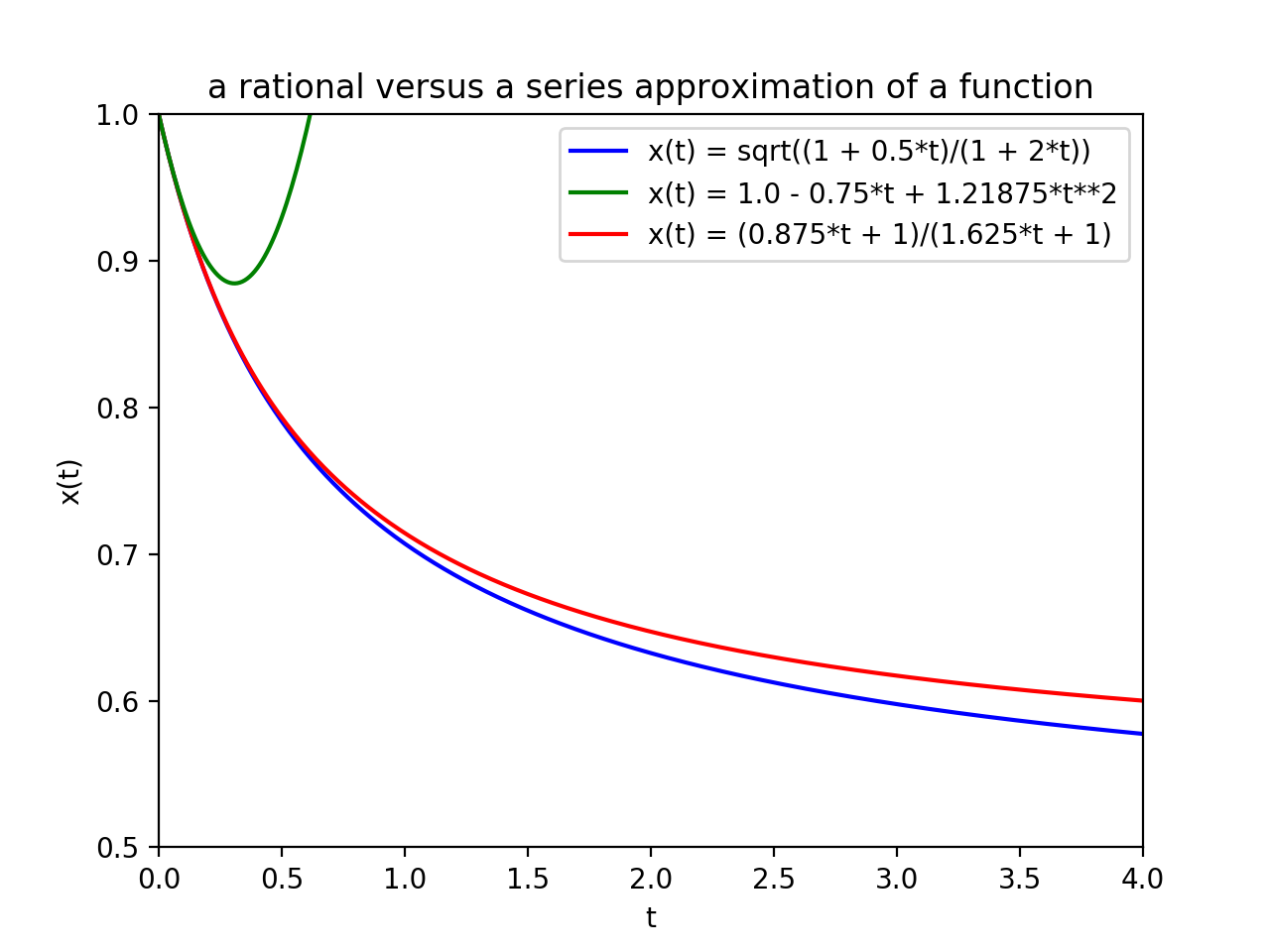}
\caption{Comparing a Pad{\'{e}} approximant to a series approximation
shows the promise of applying Pad{\'{e}} approximants as predictors
in numerical continuation methods.}
\label{figex4pade}
\end{center}
\end{figure}

\subsection{Motivating Example: Viviani's Curve}\label{introViviani}
\begin{figure}[hbtp]
\begin{center}
\includegraphics[scale=.24]{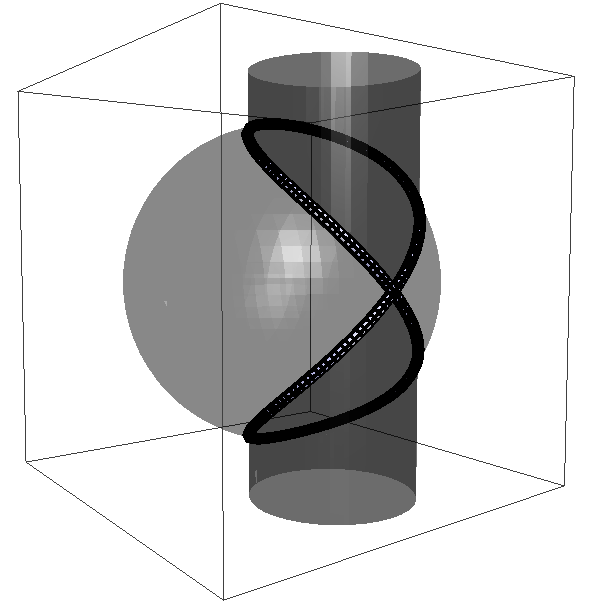}
\caption{Viviani's curve as the intersection of a sphere with a cylinder.}
\label{vivianicurve}
\end{center}
\end{figure}
Viviani's curve is defined as the
intersection of the sphere $(x_1+2)^2 + x_2^2 + x_3^2 = 4$ and the cylinder
$(x_1+1)^2 + x_2^2 = 1$ such that the surfaces are tangent at a single point;
see Figure~\ref{vivianicurve}. 
Our methods will allow us to find the Taylor series expansion around 
any point on a 1-dimensional variety, 
assuming we have suitable starting information. For example, the origin
$(0,0,0)$ satisfies both equations of Viviani's curve. This is the point where
the curve intersects itself, so the curve 
is {\em singular}\footnote{Definition~\ref{def:singular} 
makes this precise for general curves.}
there, meaning algebraically that the Jacobian drops rank, and 
geometrically that the tangent space does not have the expected dimension.
If we apply our methods at this point, we obtain the following
series solution for $x_1,x_2,x_3$:
\begin{equation}\label{vivianiSeries}
 \left\{\begin{array}{l}
   -2t^{2}  \\
   2t - t^{3} - \frac{1}{4}t^{5} - \frac{1}{8}t^{7} - \frac{5}{64}t^{9} -
   \frac{7}{128}t^{11} - \frac{21}{512}t^{13} - \frac{33}{1024}t^{15}  \\
   2t
 \end{array}\right.
\end{equation}
This solution is plotted in Figure~\ref{vivianiApprox} for a varying number of
terms. To check the correctness, we can substitute (\ref{vivianiSeries}) into
the original equations, obtaining series in $O(t^{18})$. The vanishing of the
lower-order terms confirms that we have indeed found an approximate series
solution. Such a solution, possibly transformed into an associated Pad\'e
approximant, would allow for path tracking starting at the origin.

\begin{figure}[hbtp]
\begin{center}
\includegraphics[scale=.24]{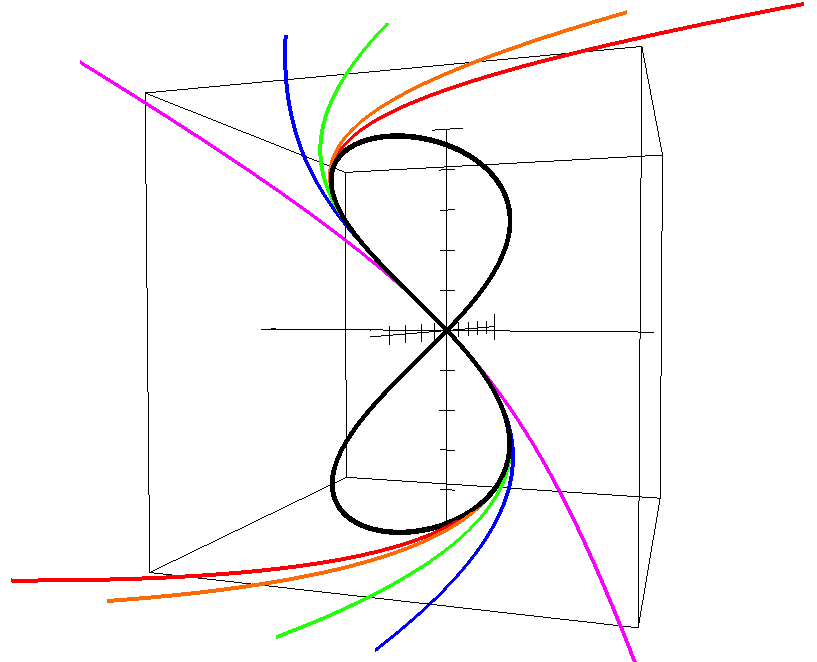}
\caption{Viviani's curve, with improving series approximations
and thus more accurate predictions for points on the curve.}
\label{vivianiApprox}
\end{center}
\end{figure}

\section{The Problem and Our Solution}
\subsection{Problem Setup}\label{sec:problemSetup}
For a polynomial system $\bff = (f_1,f_2,\ldots,f_m)$ where each $f_i \in \CC[t,x_1,\ldots,x_n]$,
the solution variety $\VV(\bff)$ is the set of points $\bfp \in \CC^{n+1}$ such
that $f_1(\bfp)=\cdots=f_m(\bfp)=0$. Let $\bff$ be a system such that
the solution variety is 1-dimensional over $\CC$ and is not contained in the
$t=0$ coordinate hyperplane. We seek to understand
$\VV(\bff)$ by treating the $f_i$'s as elements of $\CC((t))[x_1,\ldots,x_n]$, or in
other words, polynomials in $x_1\ldots x_n$ with coefficients in the ring of
formal Laurent series $\CC((t))$. In this context we will denote the system by
$\tilde{\bff}$.

Our approach is to use Newton iteration on the system
$\tilde{\bff}$. Namely, we find some starting $\z\in \CC((t))^n$ and
repeatedly solve
\begin{equation} \label{eqnewton}
  J_{\tilde{\bff}}(\z) \Delta \z = - \tilde{\bff}(\z)
\end{equation}
for the update $\Delta \z$ to $\z$, where $J_{\tilde{\bff}}$ is the Jacobian
matrix of $\tilde{\bff}$ with respect to $x_1,\ldots,x_n$. This is a system of
equations that is linear over $\CC((t))$, so the problem is well-posed.
Computationally speaking, one approach to solving it would be to overload the
operators on (truncated) power series 
and apply basic linear algebra techniques.
A main point of our paper is that this method can be improved upon.

Of course, applying Newton's method requires a starting guess; here we must
define what it means to be singular:
\begin{definition}\label{def:singular}{\rm 
A point $\bfp$ on a $d$-dimensional component of a variety $\VV(\bff) \subset
\CC^n$ is {\em regular} if the Jacobian of $\bff$ evaluated at $\bfp$ has rank
$n-d$. Points that are not regular are called {\em singular}.}
\end{definition}
In most cases the starting guess for Newton's method can just be a point
$\tilde{\bfp} = (p_1,\ldots,p_n)$ such that $\bfp = (0,p_1,\ldots,p_n)$ is in $\in \VV(\bff)$.
However, if $\bfp$ is a singular point, this is insufficient. In addition, $\bfp$
could be a branch point (which we define later), in which case it is also
not enough to use as the starting guess for Newton's method.

We solve two problems in this paper. First, we find an effective way to perform
the Newton step; the framework is established in
Section~\ref{sec:newtonStep}, and our solution is laid out in
Section~\ref{sec:echelonForm}. And second, we discuss the prelude to
Newton's method in Section~\ref{sec:startingGuess}, characterizing when
techniques from tropical geometry are needed to transform the problem and obtain
the starting guess.

\subsection{The Newton Step}\label{sec:newtonStep}
Solving the Newton step~(\ref{eqnewton}) amounts to solving a linear system 
\begin{equation}\label{basiclinear}
\bf Ax=b
\end{equation}
over the field $\CC((t))$. 
Our first step is linearization, which turns a vector of
series into a series of vectors, and likewise for a matrix series. 
In other words, we refactor the problem and think of $\x$ and $\bfb$ 
as in $\CC^n((t))$
instead of $\CC((t))^n$, and $\bfA$ as in $\CC^{n\times n}((t))$
instead of $\CC((t))^{n\times n}$.

Suppose that $a$ is the lowest order of a term in $\bfA$, and $b$ the lowest
order of a term in $\bfb$. Then we can write the linearized
\begin{align}
\bfA &= A_0t^a + A_1t^{a+1}+\ldots,\\
\bfb &= \bfb_0t^b + \bfb_1t^{b+1} + \ldots, \text{ and}\\
\x   &= \x_0t^{b-a} + \x_1t^{b-a+1}+\ldots
\end{align}
where $A_i\in\CC^{n\times n}$ and
$\bfb_i,\x_i\in\CC^n$. Expanding and equating powers of $t$, the linearized
version of (\ref{basiclinear}) is therefore equivalent to solving
\begin{eqnarray}\label{staggeredSystem}
   A_0 \x_0 & = & \bfb_0 \nonumber \\
   A_0 \x_1 & = & \bfb_1 - A_1 \x_0 \nonumber \\
   A_0 \x_2 & = & \bfb_2 - A_1 \x_1 - A_2 \x_0 \\
            & \vdots & \nonumber  \\
   A_0 \x_d & = & \bfb_d - A_1 \x_{d-1} - A_2 \x_{d-2} - \cdots - A_d \x_0 \nonumber
\end{eqnarray}
for some $d$. This can be written in block matrix form as 
\begin{equation} \label{eqbiglinsys}
  \left[
    \begin{array}{ccccc}
       A_0 &         &         & & \\
       A_1 & A_0     &         & & \\
       A_2 & A_1     & A_0     & & \\
      \vdots & \vdots & \vdots & \ddots & \\
       A_d & A_{d-1} & A_{d-2} & \cdots & A_0 \\
    \end{array}
  \right]
  \left[
    \begin{array}{c}
       \x_0 \\
       \x_1 \\
       \x_2 \\ \vdots \\
       \x_d
    \end{array}
  \right]
  = 
  \left[
    \begin{array}{c}
       \bfb_0 \\
       \bfb_1 \\
       \bfb_2 \\ \vdots \\
       \bfb_d
    \end{array}
  \right].
\end{equation}
For the remainder of this paper, we will use $\z$ and $\Delta \z$ to denote
vectors of series, while $\x$ and $\Delta \x$ will denote their linearized
counterparts, that is, series which have vectors for coefficients.

\begin{example}
{\rm Let 
\begin{equation}
\bff=(2t^2 + tx_1 - x_2 + 1, x_1^3 - 4t^2 + tx_2 + 2t - 1).
\end{equation}
Starting with $\z = (1,1)$, the
first Newton step $J_{\tilde{\bff}}(\z) \Delta \z = -
\tilde{\bff}(\z)$ can be written:
\begin{equation}
   \left[
        \begin{array}{rr}
        t & -1 \\
        3 & t
        \end{array}
   \right]
     \Delta\z
   = 
   -
   \left[
        \begin{array}{r}
         t + 2 t^{2} \\
         3 t - 4 t^{2}
        \end{array}
   \right].
\end{equation}
To put in linearized form, we have $a=0$, $b=1$,
\begin{equation}
A_0 = \left[\begin{array}{rr}
        0 & -1 \\
        3 & 0
        \end{array}\right],
A_1 = \left[\begin{array}{rr}
        1 & 0 \\
        0 & 1
        \end{array}\right],
\end{equation}
\begin{equation}
\bfb_0 = \left[\begin{array}{r}
       -1 \\
       -3 
        \end{array}\right],\text{ and }
\bfb_1 = \left[\begin{array}{r}
       -2 \\
        4 
        \end{array}\right].
\end{equation}
Since $A_0$ is regular, we can solve in staggered form as
in~(\ref{staggeredSystem}), which yields the next term:
\begin{equation}
  \Delta\x = \left[\begin{array}{r}
  - 1 \\
   1
  \end{array}\right]t.
\end{equation}
After another iteration, our series solution is
\begin{equation}\label{regularSol}
  \left[\begin{array}{r}
  1 - t \\
  1 + t + t^{2}
  \end{array}\right].
\end{equation}
In fact this is the entire series solution for
$\bff$ --- substituting~(\ref{regularSol}) 
into $\bff$ causes both polynomials to vanish completely.
}
\end{example}

\begin{remark}{\rm
We constructed the example above so its solution is a series with
finitely many terms, a polynomial.  The solution of~(\ref{basiclinear})
can be interpreted as the solution obtained via Hermite interpolation.
Observe that for a series
\begin{equation}
   \x(t) = \x_0 + \x_1 t + \x_2 t^2 + \x_3 t^3 + \cdots + \x_k t^k + \cdots
\end{equation}
its Maclaurin expansion is
\begin{equation}
   \x(t) = \x(0) + \x'(0) t + \frac{1}{2} \x''(0) t^2
         + \frac{1}{3!} \x'''(0) t^3 + \cdots
         + \frac{1}{k!} \x^{(k)}(0) t^k + \cdots
\end{equation}
where $\x^{(k)}(0)$ denotes 
the $k$-th derivative of $\x(t)$ evaluated at zero.  Then:
\begin{equation}
   \x_k = \frac{1}{k!} \x^{(k)}(0), \quad k=0, 1, \ldots
\end{equation}
Solving~(\ref{basiclinear}) up to degree~$d$ implies that all
derivatives up to degree~$d$ of $\x(t)$ at $t = 0$ match the solution.
If the solution is a polynomial, then this polynomial will be obtained
if~(\ref{basiclinear}) is solved up to the degree of the polynomial.}
\end{remark}

\subsection{The Starting Guess, and Related Considerations}
\label{sec:startingGuess}
Our hope is that a solution $\z(t)$ of $\tilde{\bff}$ parameterizes the curve
in some neighborhood of a point $\bfp \in \VV(\bff)$. In other words, if $\pi$
is the projection map of $\VV(\bff)$ onto the $t$-coordinate axis, then
$\z(t)$ should be a branch of $\pi^{-1}$.  

It is natural to think that there are two scenarios for the starting point $\bfp
\in \VV(\bff)$, namely that it is a regular point or it is singular. And indeed,
when $\bfp$ is singular, tropical methods are required. Intuitively speaking,
when at a singular point, knowing just the point itself is insufficient to
determine the series; higher-derivative information is required. Observe the
second frame of Figure~\ref{fig:nodalCurves}.

The point $\bfp$ being regular, however, is not enough. Consider the third
frame of Figure~\ref{fig:nodalCurves}. Here $x=0$ cannot be lifted because the
origin is a {\em branch point} of the curve. In other words, the derivative at
$\bfp$ in terms of $t$ is undefined, so a Taylor series in $t$ is impossible
without a transformation of the problem.

The proper way to check if Newton's method can be applied directly to
$\bfp$, or whether tropical methods are needed, is by checking if $\bfp$ is a
singular point of $\VV(\bff)\cap \VV(t)$. Setting $\bff_{\aug}=
(t,f_1,\ldots,f_n)$, we have $\VV(\bff_{\aug}) = \VV(\bff)\cap \VV(t)$.
We can thus use $\VV(\bff_{\aug})$ to distinguish the first frame of
Figure~\ref{fig:nodalCurves} from the latter two. This is summarized and proven
in the following.

\begin{figure}
\captionsetup[subfigure]{labelformat=empty}
\centering
\begin{subfigure}{.33\textwidth}
  \centering
\includegraphics[scale=.15]{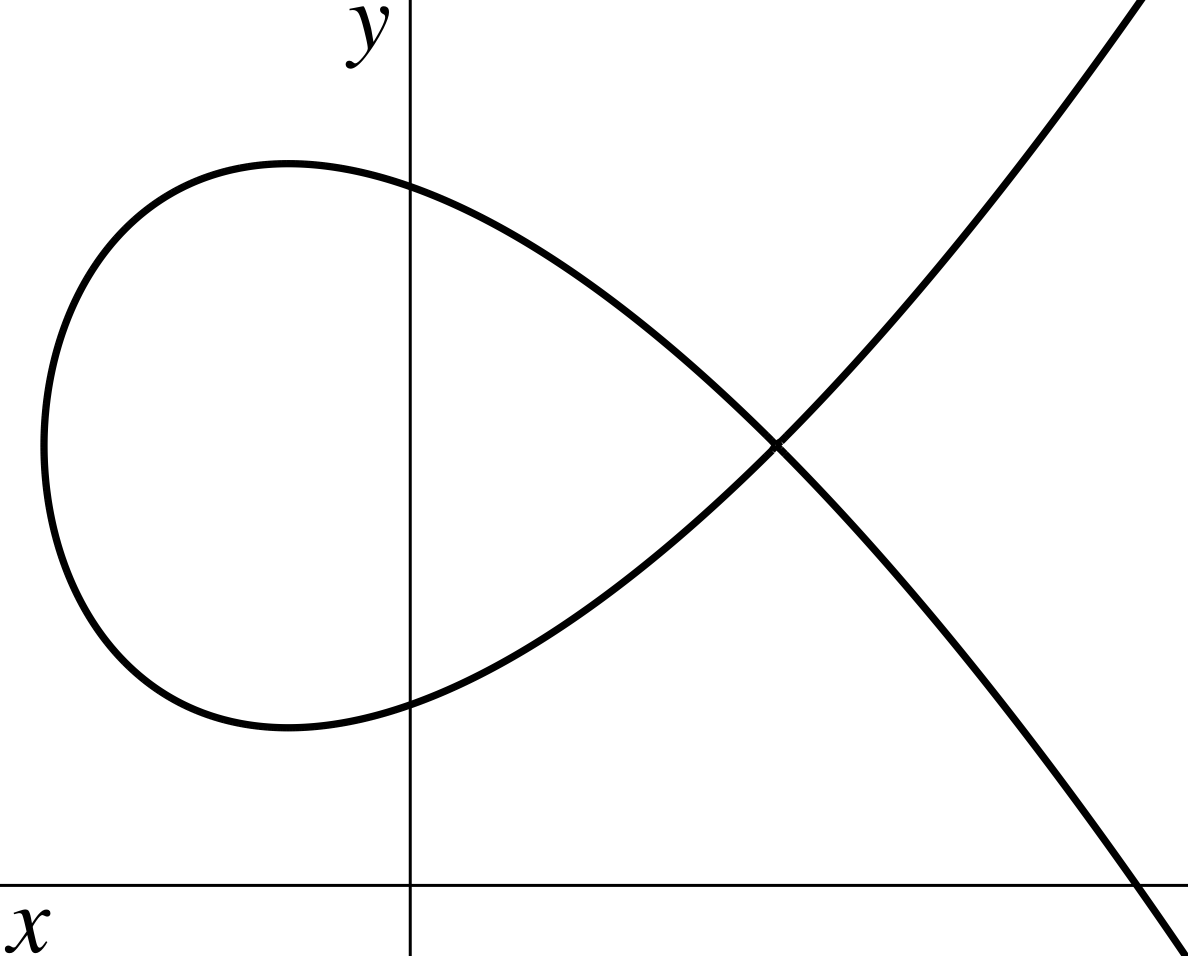}
  \caption{a general point}
\end{subfigure}%
\begin{subfigure}{.33\textwidth}
  \centering
\includegraphics[scale=.15]{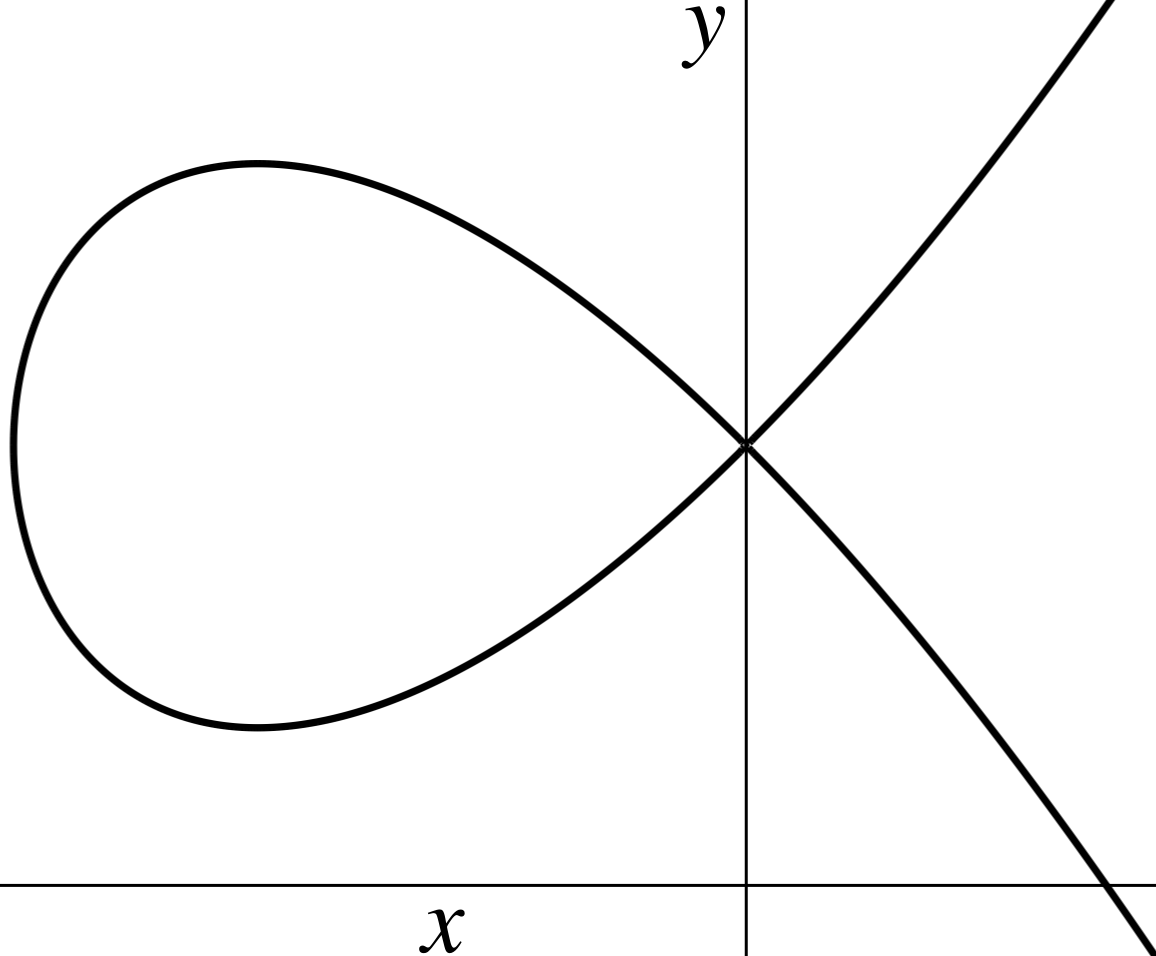}
  \caption{a singularity}
\end{subfigure}
\begin{subfigure}{.33\textwidth}
  \centering
\includegraphics[scale=.15]{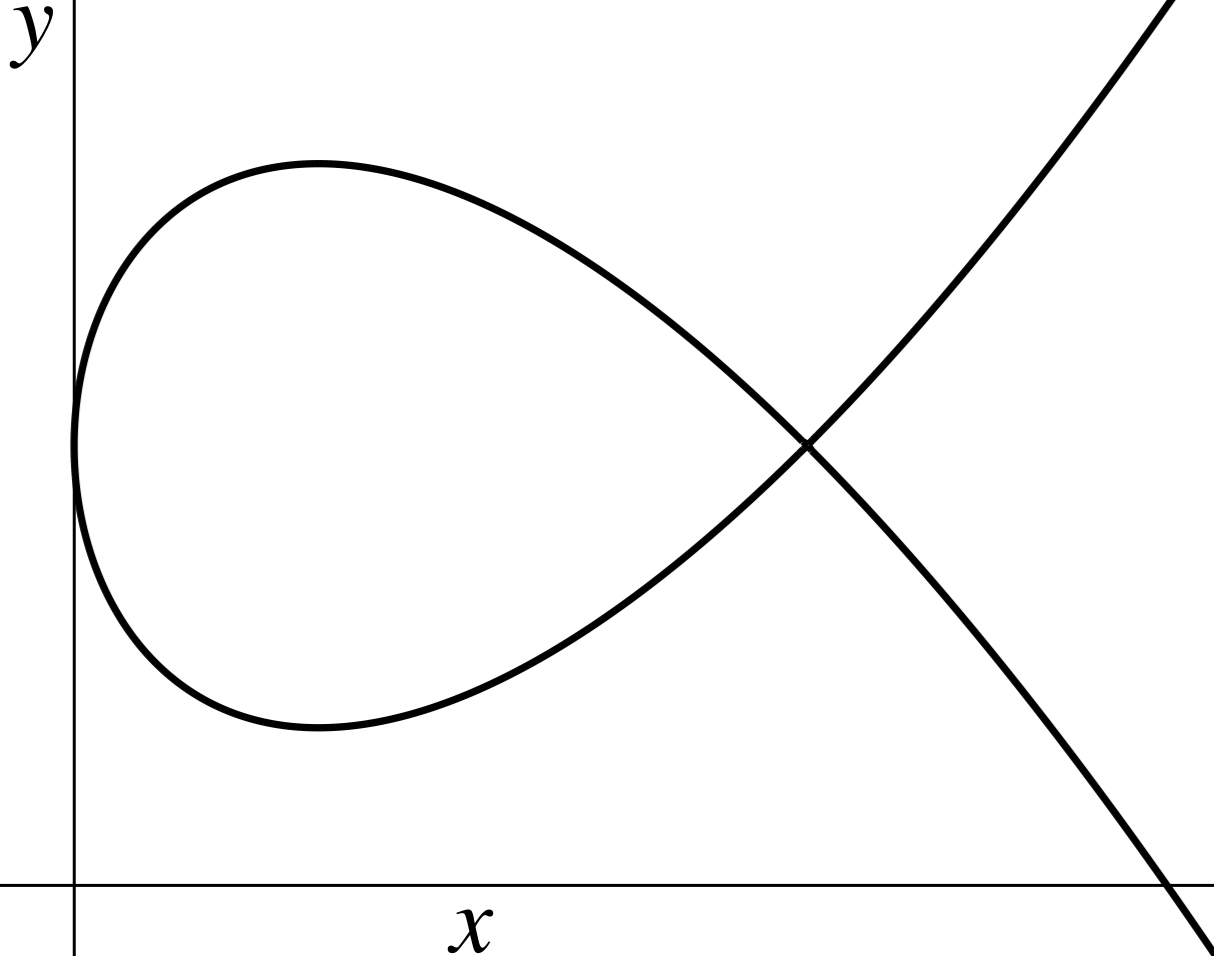}
  \caption{a branch point}
\end{subfigure}
\captionsetup{font=scriptsize}
\caption{\label{fig:nodalCurves}
Lifting $x=0$ to three different types of point.
In general, the line $x=0$ intersects the curve at regular points.
If the curve intersects itself for $x=0$, we are at a singular point.
The curve turns at a branch point.}
\end{figure}

\begin{proposition} \label{regularStartProp}
Let $\bfp = (0,p_1,\ldots,p_n) \in \VV(\bff)$, and set $\tilde{\bfp} =
(p_1,\ldots,p_n)$.  Then $\bfp$ is a regular point of $\VV(\bff_{\aug})$ if and
only if for every step of Newton's method applied to $\x(t):=\tilde{\bfp}$, $a=0$ and
$A_0$ has full rank.
\end{proposition}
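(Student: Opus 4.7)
The plan is to reduce both sides of the equivalence to the same rank condition on $J_{\tilde{\bff}}(\bfp)$, and then to argue that once this condition holds at the first Newton step it is preserved at every subsequent step.

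First I would unpack the geometric side. Since $\bff_{\aug} = (t, f_1, \ldots, f_n)$, the Jacobian of $\bff_{\aug}$ at $\bfp$ with respect to $(t, x_1, \ldots, x_n)$ has the block form
\begin{equation*}
J_{\bff_{\aug}}(\bfp) = \left[\begin{array}{cc} 1 & \zero^{\top} \\ \ast & J_{\tilde{\bff}}(\bfp) \end{array}\right],
\end{equation*}
where $J_{\tilde{\bff}}(\bfp)$ is the $n \times n$ Jacobian in the $x$-variables. Because $\VV(\bff)$ is 1-dimensional and not contained in $\VV(t)$, the intersection $\VV(\bff_{\aug}) = \VV(\bff) \cap \VV(t)$ is 0-dimensional in $\CC^{n+1}$, so by Definition~\ref{def:singular} regularity of $\bfp$ there amounts to $J_{\bff_{\aug}}(\bfp)$ having rank $n+1$. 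Block expansion along the first row shows that this is equivalent to $J_{\tilde{\bff}}(\bfp)$ having full rank $n$.

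Next I would translate the first Newton step. With $\z := \tilde{\bfp}$ constant in $t$, the coefficient matrix $\bfA = J_{\tilde{\bff}}(\tilde{\bfp})$ is a matrix of series in $t$ whose value at $t=0$ is precisely $J_{\tilde{\bff}}(\bfp)$. Hence \emph{``$a = 0$ and $A_0$ of full rank''} at the first step is equivalent to $J_{\tilde{\bff}}(\bfp)$ being an invertible $n \times n$ complex matrix, which matches the geometric condition above. This settles the ``only if'' direction and the first-step case of the ``if'' direction.

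The main obstacle, and the only nontrivial part, is propagating the condition from step one to every subsequent step. The key observation is that $\bfp \in \VV(\bff)$ gives $\tilde{\bff}(\tilde{\bfp})|_{t=0} = \bff(\bfp) = \zero$, so the right-hand side $\bfb$ has order $b \geq 1$; combined with $a = 0$ the update becomes $\Delta\z = \x_0 t^b + \x_1 t^{b+1} + \cdots$, which vanishes at $t=0$. Therefore adding $\Delta\z$ to $\z$ does not alter its constant-in-$t$ part, and by induction $\z|_{t=0} = \tilde{\bfp}$ at every iteration. This keeps $J_{\tilde{\bff}}(\z)|_{t=0} = J_{\tilde{\bff}}(\bfp)$ and $\tilde{\bff}(\z)|_{t=0} = \zero$ throughout, so the conditions $a=0$, $A_0$ of full rank, and $b \geq 1$ persist at every step, completing the equivalence.
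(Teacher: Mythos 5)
Your proof is correct and takes essentially the same route as the paper: reduce regularity of $\bfp$ in $\VV(\bff_{\aug})$ to full rank of $J_{\tilde{\bff}}|_{t=0}$ at $\tilde{\bfp}$, identify this with the condition $a=0$ and $A_0$ invertible at the first step, and then induct using $\tilde{\bff}(\z)|_{t=0}=0 \Rightarrow b\geq 1$ to show that the constant term $\x(0)=\tilde{\bfp}$, hence $A_0$, is unchanged at every subsequent step. (Two cosmetic notes: you swapped the labels ``if'' and ``only if'' relative to the statement, though the content is clear; and the paper allows $m\geq n$ equations, so ``full rank'' is more precise than ``invertible $n\times n$.'')
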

\begin{proof}
$(\Rightarrow)$ By definition, $\bfp$ is a regular point of $\bff_{\aug}$ if and
only if $J_{\bff_{\aug}}(\bfp)$ has full rank. But note that $J_{\bff_{\aug}}$ is
\begin{equation}\label{proofJac1}
   \left[
      \begin{array}{cccc}
         1 & 0 & \cdots & 0 \\
         df_1/dt & df_1/dx_1 & \cdots & df_1/dx_n \\
         df_2/dt & df_2/dx_1 & \cdots & df_2/dx_n \\
         \vdots    & \vdots    &        & \vdots \\
         df_m/dt & df_m/dx_1 & \cdots & df_m/dx_n
      \end{array}
   \right].
\end{equation}
and $J_{\tilde{\bff}}$ is
\begin{equation}\label{proofJac2}
   \left[
      \begin{array}{ccc}
         df_1/dx_1 & \cdots & df_1/dx_n \\
         df_2/dx_1 & \cdots & df_2/dx_n \\
         \vdots    &        & \vdots \\
         df_m/dx_1 & \cdots & df_m/dx_n
      \end{array}
   \right].
\end{equation}
So $J_{\bff_{\aug}}$ has full rank at $\bfp$ 
if and only if $J_{\tilde{\bff}}|_{t=0}$ has full rank at $\tilde{\bfp}$. Thus it suffices to show that
after each Newton step, $a=0$ and $\x(0)=\tilde{\bfp}$ remain true, so that $A_0 =
J_{\tilde{\bff}}(\x(0)) = J_{\tilde{\bff}}(\tilde{\bfp})|_{t=0}$ continues to
have full rank.

We clearly have $a\geq 0$ at every step, since the Newton iteration cannot
introduce negative exponents. At the beginning, $a=0$ and $\x(0)=\tilde{\bfp}$ hold
trivially. Inducting on the Newton steps, 
if $a=0$ and $\x(0)=\tilde{\bfp}$ at some point
in the algorithm, then the next $A_0$, 
namely $J_{\tilde{\bff}}(\x(0)) = J_{\tilde{\bff}}(\tilde{\bfp})|_{t=0}$, is the same matrix as
in the last step, hence it is again regular and $a$ is 0. 
Since $\tilde{\bff}(\x(0)) = \tilde{\bff}(\tilde{\bfp})|_{t=0}=0$, $b$ must be strictly 
greater than~0. Thus the next Newton
update $\Delta \x$ must have positive degree in all components,
leaving $\x(0)= \tilde{\bfp}$ unchanged.

$(\Leftarrow)$ If $\bfp$ is a singular point of $\VV(\bff_{\aug})$,
then on the first Newton step $A_0=J_{\tilde{\bff}}(\tilde{\bfp})|_{t=0}$ must drop rank by the
same argument as above comparing~(\ref{proofJac1}) and~(\ref{proofJac2}).
\end{proof}

To summarize the cases:
\begin{lemma}\label{startingScenarios}
There are three possible scenarios for $\VV(\bff_{\aug})$:
\begingroup \addtolength{\jot}{0.5em}
\begin{align}
&\text{1.~~} \exists \bfp \in \VV(\bff_{\aug})\text{ regular,}\nonumber \\
&\text{2.~~} \exists \bfp \in \VV(\bff_{\aug})\text{ singular, or} \nonumber\\
&\text{3.~~} \nexists \bfp \in \VV(\bff_{\aug})\nonumber
\end{align} \endgroup
\end{lemma}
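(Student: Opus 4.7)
The statement is essentially a trichotomy that exhausts all possibilities, so my plan is to argue by a simple case split rather than to establish any non-trivial structural fact. The key observation is that these three scenarios are not mutually exclusive (cases 1 and 2 can simultaneously hold), but the claim is only that at least one of them must.

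First I would dispatch the trivial dichotomy: either $\VV(\bff_{\aug})$ is empty, in which case scenario~3 holds and we are done, or $\VV(\bff_{\aug})$ is non-empty. In the latter situation, pick any $\bfp \in \VV(\bff_{\aug})$. By Definition~\ref{def:singular}, $\bfp$ is either a regular point or a singular point of $\VV(\bff_{\aug})$. In the first case, scenario~1 is realized by this $\bfp$; in the second, scenario~2 is realized. This covers every possibility.

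There is really no obstacle here, because regularity versus singularity is defined as a dichotomy at each point of the variety, and existence versus non-existence of such a point is a tautological dichotomy at the level of the variety itself. The only mild subtlety worth flagging explicitly in the write-up is that the three scenarios should be read as an inclusive disjunction: a variety may well contain both regular and singular points (as in the motivating examples of Section~\ref{introViviani}, where Viviani's curve has a singular point at the origin but regular points elsewhere), so scenarios~1 and~2 can hold simultaneously. The lemma's content is therefore simply that scenario~3 together with the existential statements of scenarios~1 and~2 form a covering of all cases, which follows immediately from the two binary splits above.

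Since no computation or deeper geometric input is needed, I would keep the written proof to two or three sentences, essentially just recording the case split and citing Definition~\ref{def:singular}, and then move directly to using the lemma to organize the discussion of starting guesses that follows.
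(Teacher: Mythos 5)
Your proposal is correct, and in fact the paper does not supply a written proof of this lemma at all; it simply states the trichotomy as a self-evident summary after Proposition~\ref{regularStartProp} and immediately moves on to say how to start Newton's method in each scenario. Your two binary case splits (empty vs.~non-empty, then regular vs.~singular via Definition~\ref{def:singular}) are exactly the reasoning the authors implicitly rely on, and your remark that scenarios~1 and~2 are not mutually exclusive is a worthwhile clarification of how the disjunction should be read. Nothing is missing.
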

In the first case, we can simply use $\tilde{\bfp} = (p_1,p_2,\ldots,p_n)$ 
to start the Newton iteration. In the
second, we must defer to tropical methods in order to obtain the necessary
starting $\z$, which will lie in $\CC[[t]]^n$. 
In the final case, we also defer to tropical methods, 
which provide a starting $\z$ that will have negative exponents.
A change of coordinates brings the problem back into one of 
the first two cases, and we can apply our method directly.  
It is important to reiterate that $\bfp$ may be a
regular point of $\VV(\bff)$ but a singular point of
$\VV(\bff_{\aug})$, as is the case in the third frame of
Figure~\ref{fig:nodalCurves}. The following example also demonstrates this
behavior.

\begin{example}[Viviani, continued] {\rm
In Section~\ref{introViviani} we introduced the example of Viviani's curve. If
we translate by a substitution so that setting $x_1=0$ gives not the singular point at
the origin, but instead the highest and lowest points on the curve, the system
becomes
\begin{equation}\label{vivianiEqs}
\bff = (x_1^2 + x_2^2 + x_3^2 - 4, (x_1-1)^2 + x_2^2 - 1).
\end{equation}
When $x_1=0$ we obtain the two points $(0,0,2)$ and $(0,0,-2)$, which are both
regular points.
For the augmented system $\bff_{\aug}$, the Jacobian $J_{\bff_{\aug}}$ is
\begin{equation}
    \left[\begin{array}{rrr}
        1 & 0 & 0 \\
        2 x_1 & 2 x_2 & 2 x_3 \\
        2 x_1 - 2 & 2 x_2 & 0
    \end{array}\right]
\end{equation}
which at the point $\bfp = (0,0,2)$ becomes
\begin{equation}
\left[\begin{array}{rrr}
1 & 0 & 0 \\
0 & 0 & 4 \\
-2 & 0 & 0
\end{array}\right].
\end{equation}
This matrix drops rank, hence $\bfp$ is a singular point of $\bff_{\aug}$ and we
are in the second case of Lemma~\ref{startingScenarios}.
Following Lemma~\ref{startingScenarios},
we defer to tropical methods to begin, obtaining
the transformation $x_1\rightarrow 2t^2$ and the starting term
$\z=(2t,2)$. Now the first Newton step can be written:
\begin{equation}\label{vivianiNewton}
   \left[
      \begin{array}{rr}
      4 t & 4 \\
      4 t & 0
      \end{array}
   \right]
     \Delta\z
   = 
   -
   \left[
      \begin{array}{r}
      4 t^{2} + 4 t^{4} \\
      4 t^{4}
      \end{array}
   \right].
\end{equation}
Note that  $J_{\tilde{\bff}}(\z)$ is now invertible over $\CC((t))$.
Its inverse begins with negative exponents of~$t$:
\begin{equation}
   \left[
      \begin{array}{cc}
                  0 & 1/4 \\
         1/4~t^{-1} & -1/4~t^{-1}
      \end{array}
   \right].
\end{equation}

To linearize, we first observe that $a=0$ and $b=2$, 
so $\x$ will have degree at least $b-a=2$.  
The linearized block form of (\ref{vivianiNewton}) is then
\begin{equation}\label{vivianiNewtonLinearized}
   \left[
     \begin{array}{rr|rr|rr}
     0 & 4 & 0 & 0 & 0 & 0 \\
     0 & 0 & 0 & 0 & 0 & 0 \\
     \hline
     4 & 0 & 0 & 4 & 0 & 0 \\
     4 & 0 & 0 & 0 & 0 & 0 \\
     \hline
     0 & 0 & 4 & 0 & 0 & 4 \\
     0 & 0 & 4 & 0 & 0 & 0
     \end{array}
   \right]
     \Delta \x   
   = 
   \left[
     \begin{array}{r}
    -4 \\
     0 \\
     0 \\
     0 \\
    -4 \\
    -4
     \end{array}
   \right].
\end{equation}
Whether we solve (\ref{vivianiNewton}) over $\CC((t))$ or solve
(\ref{vivianiNewtonLinearized}) in the least squares sense, 
we obtain the same Newton update
\begin{equation}
\Delta\x=
\left[\begin{array}{r}
0 \\
-1
\end{array}\right]t^2 + 
\left[\begin{array}{r}
-1 \\
0
\end{array}\right]t^3,
\end{equation}
or in non-linearized form,
\begin{equation}
\Delta\z=
\left[\begin{array}{r}
-t^{3} \\
-t^{2}
\end{array}\right].
\end{equation}
Substituting $\z+\Delta\z = (2t-t^3,2-t^2)$ into (\ref{vivianiEqs}) produces
$(x_1^6 + x_1^4, x_1^6)$, and we have obtained the desired cancellation of
lower-order terms.
}
\end{example}

The matrix in~(\ref{vivianiNewtonLinearized}) 
we call a Hermite-Laurent matrix, because its correspondence
with Hermite-Laurent interpolation.

\subsection{A Lower Triangular Echelon Form}\label{sec:echelonForm}

When we are in the regular case of Lemma~\ref{startingScenarios} and the
condition number of $A_0$ is low, we can
simply solve the staggered system~(\ref{staggeredSystem}). When this is not
possible, we are forced to solve~(\ref{eqbiglinsys}).
Figure~\ref{figechelon} shows the structure of 
the coefficient matrix~(\ref{eqbiglinsys}) for the regular case,
when $A_0$ is regular and all block matrices are dense.
The essence of this section is that we can use column operations to reduce the
block matrix to a lower triangular
echelon form as shown at the right of Figure~\ref{figechelon},
solving~(\ref{eqbiglinsys}) in the same time as~(\ref{staggeredSystem}).

\begin{figure}
\begin{center}
\begin{picture}(400,100)(0,0)
\put(0,0){\includegraphics[scale=.30]{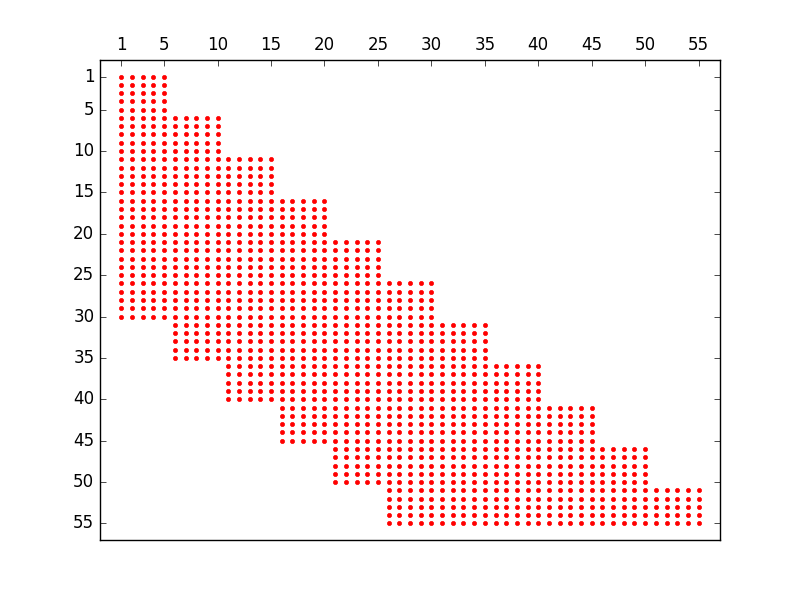}}
\put(170,0){\includegraphics[scale=.30]{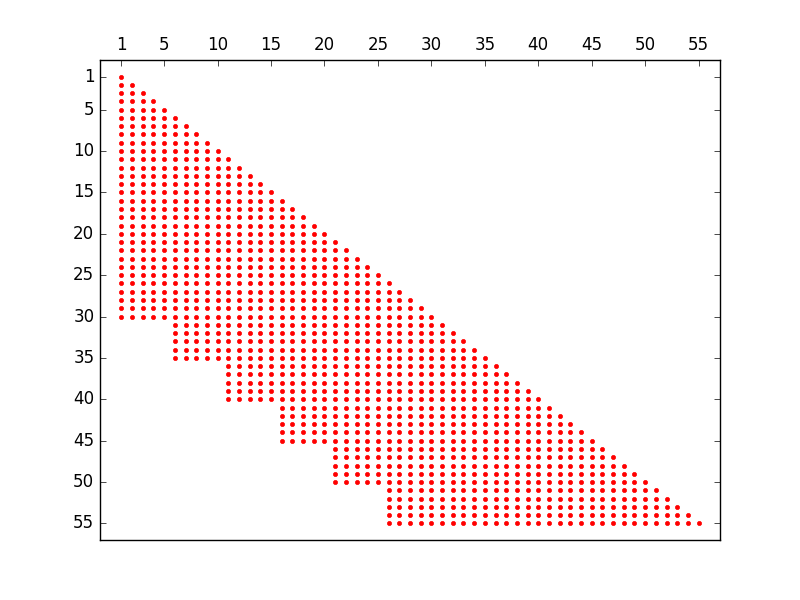}}
\end{picture}
\caption{The banded block structure of a generic Hermite-Laurent matrix
for $n=5$ at the left, with at the right its lower triangular echelon form.}
\label{figechelon}
\end{center}
\end{figure}

The lower triangular echelon form of a matrix is a lower triangular
matrix with zero elements above the diagonal.
If the matrix is regular, then all diagonal elements are nonzero.
For a singular matrix, the zero rows of its echelon form are on top
(have the lowest row index) and the zero columns are at the right
(have the highest column index).
Every nonzero column has one pivot element, which is the 
nonzero element with the smallest row index in the column.
All elements at the right of a pivot are zero.
Columns may need to be swapped so that the row indices of the
pivots of columns with increasing column indices are sorted
in decreasing order.

\begin{example} {\rm (Viviani, continued).
For the matrix series in~(\ref{vivianiNewtonLinearized}),
we have the following reduction:
\begin{equation}
  \left[
     \begin{array}{cccccc}
        0 & 4 & 0 & 0 & 0 & 0 \\
        0 & 0 & 0 & 0 & 0 & 0 \\
        4 & 0 & 0 & 4 & 0 & 0 \\
        4 & 0 & 0 & 0 & 0 & 0 \\
        0 & 0 & 4 & 0 & 0 & 4 \\
        0 & 0 & 4 & 0 & 0 & 0
     \end{array}
  \right]
  \rightarrow
  \left[
     \begin{array}{cccccc}
        0 & 0 &  0 & 0 &  0 & 0 \\
        4 & 0 &  0 & 0 &  0 & 0 \\
        0 & 4 &  0 & 0 &  0 & 0 \\
        0 & 4 &  4 & 0 &  0 & 0 \\
        0 & 0 &  0 & 4 &  0 & 0 \\
        0 & 0 &  0 & 4 &  4 & 0
     \end{array}
  \right].
\end{equation}
Because of the singular matrix coefficients in the series,
we find zeros on the diagonal in the echelon form.
}
\end{example}

Given a general $n$-by-$m$ dimensional matrix~$A$, 
the lower triangular echelon form $L$
can be described by one $n$-by-$n$ row permutation matrix $P$ which
swaps the zero rows of~$A$ and a sequence of $m$
column permutation matrices $Q_k$ (of dimension $m$) and
multiplier matrices $U_k$ (also of dimension $m$).
The matrices $Q_k$ define the column swaps to bring the pivots
with lowest row indices to the lowest column indices.
The matrices $U_k$ contain the multipliers to reduce what
is at the right of the pivots to zero.
Then the construction of the lower triangular echelon form
can be summarized in the following matrix equation:
\begin{equation}
   L = P A Q_1 U_1 Q_2 U_2 \cdots Q_m U_m.
\end{equation}
Similar to solving a linear system with a LU factorization,
the multipliers are applied to the solution of the lower
triangular system which has $L$ as its coefficient matrix.

\section{Some Preliminary Cost Estimates}

Working with truncated power series is somewhat similar to working
with extended precision arithmetic.
In this section we make some observations regarding the cost overhead.

\subsection{Cost of one step}
First we compare the cost of computing a single Newton step using the various
methods introduced. We let $d$ denote the degree of the truncated series in $\bfA(t)$,
and $n$ the dimension of the matrix coefficients in $\bfA(t)$ as before.

\noindent {\bf The staggered system.} In the case that
$a\geq 0$ and the leading coefficient $A_0$ of the matrix series $\bfA(t)$ is
regular, the equations in~{\rm (\ref{staggeredSystem})} can be solved with
$O(n^3) + O(d n^2)$ operations.  The cost is $O(n^3)$ for the decomposition of
the matrix $A_0$, and $O(d n^2)$ for the back substitutions using the
decomposition of $A_0$ and the convolutions to compute the right hand sides.

\noindent {\bf The big block matrix.} Ignoring the
triangular matrix structure, the cost of solving the larger linear
system~(\ref{eqbiglinsys}) is $O((dn)^3)$.

\noindent {\bf The lower triangular echelon version.}
If the leading coefficient $A_0$ in the matrix series is regular
(as illustrated by Figure~\ref{figechelon}), we may copy the
lower triangular echelon form $L_0 = A_0 Q_0 U_0$ of $A_0$ to all blocks 
on the diagonal and apply the permutation $Q_0$ and column operations
as defined by $U_0$ to all other column blocks in~$\bfA$.
The regularity of~$A_0$
implies that we may use the lower triangular echelon form of $L_0$
to solve~(\ref{eqbiglinsys}) with substitution.
Thus with this quick optimization we obtain
the same cost as solving the staggered system~(\ref{staggeredSystem}).

In general, $A_0$ and several other matrix coefficients
may be rank deficient, and the diagonal of nonzero pivot elements will
shift towards the bottom of~$L$.  We then find as solutions vectors
in the null space of the upper portion of the matrix~$\bfA$.

\subsection{Cost of computing $D$ terms}
Assume that $D=2^k$. In the regular case, assuming quadratic convergence, it
will take $k$ steps to compute $2^k$ terms. We can reuse the factorization of
$A_0$ at each step, so we have $O(n^3)$ for the decomposition plus 
\begin{equation}
O(2n^2 + 4n^2 + 8n^2 + \cdots + 2^{k-1}n^2) = O(2^kn^2)
\end{equation}
for the back substitutions. Putting these together,
we find the cost of computing $D$ terms to be $O(n^3) + O(D n^2)$.

\section{Computational Experiments}

Our power series methods have been implemented in PHCpack~\cite{Ver99}
and are available to the Python programmer via phcpy~\cite{Ver14}.
To set up the problems we used the computer algebra system Sage~\cite{Sage}, and
for tropical computations we used Gfan~\cite{BHS08} and Singular~\cite{DGPS} via
the Sage interface.

\subsection{The Problem of Apollonius}

\begin{figure}[hbt]
\begin{center}
\includegraphics[scale=.30]{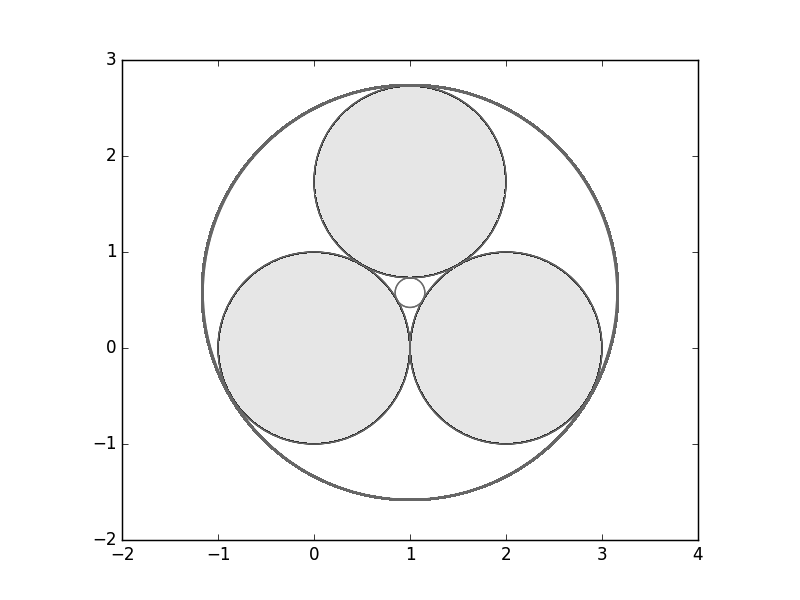}
\caption{Singular configuration of Apollonius circles. 
The input circles are filled in, the solution circles are dark gray.
Because the input circles mutually touch each other,
three of the solution circles coincide with the input circles. }
\label{appolfig}
\end{center}
\end{figure}

The classical problem of Apollonius consists in finding all circles that are
simultaneously tangent to three given circles. 
A special case is when the three
circles are mutually tangent and have the same radius; see
Figure~\ref{appolfig}. Here the solution variety is singular -- the circles
themselves are double solutions. In this figure, all have radius 3, and
centers $(0,0)$, $(2,0)$, and $(1,\sqrt{3})$. We can study this
configuration with power series techniques by introducing a parameter $t$ to
represent a vertical shift of the upper circle. We then examine the solutions
as we vary $t$. This is represented algebraically as a solution to 
\begin{equation} \label{appollo}
   \left\{
      \begin{array}{rcl}
        x_1^2 + x_2^2 - r^2 - 2r - 1 & = & 0 \\
        x_1^2 + x_2^2 - r^2 - 4x_1 - 2r + 3 & = & 0 \\
        t^2 + x_1^2 - 2tx_2 + x_2^2 - r^2 
        + 2\sqrt{3}t - 2x_1 - 2\sqrt{3}x_2 + 2r + 3 & = & 0.
      \end{array}
   \right.
\end{equation}
Because we are interested in power series solutions of~(\ref{appollo})
near $t=0$, we use $t$ as our free variable. 
To simplify away the $\sqrt{3}$, we substitute
$t\rightarrow \sqrt{3}t$, $x_2 \rightarrow \sqrt{3} x_2$, 
and the system becomes
\begin{equation} \label{appollo2}
   \left\{
      \begin{array}{rcl}
        x_1^2 + 3x_2^2 - r^2 - 2r - 1 & = & 0 \\
        x_1^2 + 3x_2^2 - r^2 - 4x_1 - 2r + 3 & = & 0 \\
        3t^2 + x_1^2 - 6tx_2 + 3x_2^2
        - r^2 + 6t - 2x_1 - 6x_2 + 2r + 3 & = & 0.
      \end{array}
   \right.
\end{equation}
Call this system $\bff$.
Now we examine the system at $(t,x_1,x_2,r) = (0,1,1,1) = \bfp$.
The Jacobian $J_{\bff}$ at $\bfp$ is
\begin{equation}
  \left[\begin{array}{rrrr}
    0 & 2 & 6 & -4 \\
    0 & -2 & 6 & -4 \\
    0 & 0 & 0 & 0
  \end{array}\right],
\end{equation}
so $\bff$ --- and by extension $\bff_{\aug}$ --- is singular at $\bfp$,
and we are in the second case of Lemma~\ref{startingScenarios}.
Tropical methods give two possible starting solutions,
which rounded for readability are
$(t,1,1+0.536t,1+0.804t)$ and $(t,1,1 + 7.464t,1 + 11.196t)$.
We will continue with the second; call it $\z$.
For the first step of Newton's method, $\bfA$ is
\begin{equation}
\left[\begin{array}{rrr}
     2 &  6  &  -4 \\
    -2 &  6  &  -4 \\
     0 &  0  &   0
\end{array}\right] + 
\left[\begin{array}{rrr}
      0  &  44.785  &  -22.392 \\
      0  &  44.785  &  -22.392 \\
      0  &  38.785  &  -22.392
\end{array}\right]t
\end{equation}
and $\bfb$ is
\begin{equation}
\left[\begin{array}{r}
  41.785 \\
  41.785 \\
  0
\end{array}\right]t^2.
\end{equation}
From these we can construct the linearized system
\begin{equation}
\left[\begin{array}{rrr}
    A_0 &     & \\
    A_1 & A_0 & \\
        & A_1 & A_0 \\
\end{array}\right]\Delta \x = 
\left[\begin{array}{c}
    \bfb_0 \\
    0 \\
    0
\end{array}\right].
\end{equation}
Solving in the least squares sense, we obtain two more terms of the series,
so in total we have
\begin{equation} \label{appolloSol2}
   \left\{
      \begin{array}{rcl}
        x_1 & = &1 \\
        x_2 & = & 1 + 7.464t + 45.017t^2 + 290.992t^3 \\
          r & = & 1 + 11.196t + 77.971t^2 + 504.013t^3.
      \end{array}
   \right.
\end{equation}
By comparison, the series we obtain from the other possible starting solution is
\begin{equation} \label{appolloSol3}
   \left\{
      \begin{array}{rcl}
        x_1 & = &1 \\
        x_2 & = &  1 + 0.536t - 0.017t^2 + 0.0077t^3\\
          r & = &  1 + 0.804t + 0.029t^2 - 0.013t^3.
      \end{array}
   \right.
\end{equation}
From these, we get a good idea of what happens near $t=0$:
the first solution circle grows rapidly
(corresponding to the larger coefficients in~(\ref{appolloSol2})),
while the other stays small
(corresponding to the smaller coefficient in~(\ref{appolloSol3})).
This is illustrated in Figure~\ref{shiftedAppol},
which shows the solutions of the system at $t=0.13$.

\begin{figure}[hbt]
\begin{center}
\includegraphics[scale=.55]{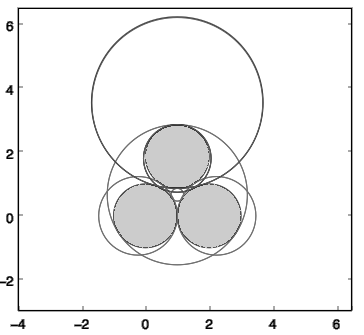}
\caption{Solution to (\ref{appollo}) for $t=0.13$.
The largest circles correspond to power series solutions with larger
coefficients than the coefficients of the power series solutions
for the smaller circles.}
\label{shiftedAppol}
\end{center}
\end{figure}

This example demonstrates the application of power series solutions
in polynomial homotopies.  
Current numerical continuation methods cannot be applied to track 
the solution paths defined by the homotopy in~(\ref{appollo}),
because at $t=0$, the start solutions are double solutions.
The power series solutions provide reliable predictions
to start tracking the solution paths defined by~(\ref{appollo}).

\subsection{Tangents to Four Spheres}

Our next example is that of finding all lines mutually tangent to four
spheres in $\mathbb{R}^3$; see~\cite{Dur98}, \cite{MPT01}, \cite{Sot11},
and~\cite{ST08}.
If a sphere $S$ has center $\bfc$ and
radius $r$, the condition that a line in $\mathbb{R}^3$ is tangent to
$S$ is given by
\begin{equation}
\|\bfm - \bfc \times \bft\|^2 - r^2 = 0,
\end{equation}
where $\bfm=(x_0,x_1,x_2)$ and $\bft=(x_3,x_4,x_5)$ are the moment and
tangent vectors of the line, respectively. For four spheres, this
gives rise to four polynomial equations; if we add the equation
$x_0x_3 + x_1x_4 + x_2x_5 = 0$ to require that $\bft$ and $\bfm$ are
perpendicular and $x_3^2 + x_4^2 + x_5^2 = 1$ to require that
$\|\bft\| = 1$, we have a system of 6 equations in 6 unknowns which we
expect to be 0-dimensional.

\begin{figure}[hbtp]
\begin{center}
\includegraphics[scale=.30]{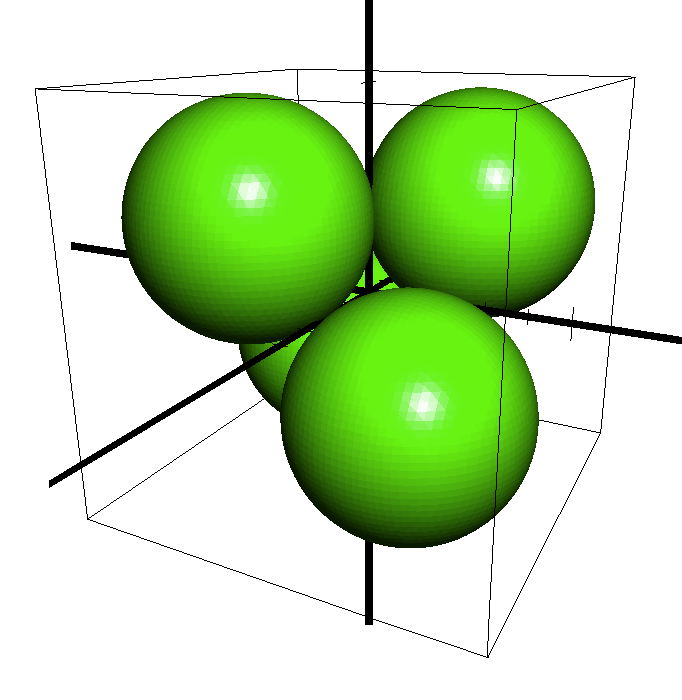}
\caption{A singular configuration of four spheres.
The input spheres mutually touch each other
and the tangent lines common to all four
input spheres occur with multiplicity.}
\label{singularSpheres}
\end{center}
\end{figure}

If we choose the centers to be $(+1,+1,+1)$, $(+1,-1,-1)$,
$(-1,+1,-1)$, and $(-1,-1,+1)$ and the radii to all be $\sqrt{2}$, the
spheres all mutually touch and the configuration is singular; see
Figure~\ref{singularSpheres}. In this case, the number of solutions
drops to three, each of multiplicity 4.

Next we introduce an extra parameter $t$ to the equations so that the radii of
the spheres are $\sqrt{2}+t$. This results in a 1-dimensional system $F$, which
we omit for succinctness. $F$ is singular at $t=0$, so we are once again in the
second case of Lemma~\ref{startingScenarios}.
Tropical and algebraic techniques --- in particular, the tropical
basis~\cite{BHS08} in Gfan~\cite{Jen08} and the primary decomposition
in Singular~\cite{DGPS} ---
decompose $F$ into three systems, one of which is
\begin{equation} \label{fourSpheresEq}
   \bff=
   \left\{
      \begin{array}{rcl}
        x_0 &= &0 \\
        x_3 &= &0 \\
        x_4^2 + x_2x_5 + x_5^2 &= &0 \\
        x_1x_4 + x_2x_5 &= &0 \\
        x_1x_2 - x_2x_4 + x_1x_5 &= &0 \\
        x_1^2 + x_2^2 - 1 &= &0 \\
        2t^4 + 4t^2 + x_2x_5 &= &0 \\
        x_2^2x_4 - x_2x_4x_5 + x_1x_5^2 - x_4 &= &0 \\
        x_2^3 - x_2 - x_5 &= &0.
      \end{array}
   \right.
\end{equation}
Using our methods we can find several solutions to this, one of which is
\begin{equation}\nonumber
 \left\{
  \begin{array}{l}
    x_0 = 0 \\
    x_1 = 2t + 4.5t^3 + 30.9375t^5 + 299.3906t^7 + 3335.0889t^9 + 40316.851t^{11} \\
    x_2 = 1 - 2t^2 - 11t^4 - 94t^6 - 986.5t^8 - 11503t^{10} \\
    x_3 = 0 \\
    x_4 = 2t - 3.5t^3 - 23.0625t^5 - 193.3594t^7 - 2019.3486t^9 - 23493.535t^{11} \\
    x_5 = -4t^2 - 10t^4 - 64t^6 - 614t^8 - 6818t^{10} - 82283t^{12}
  \end{array}.
 \right.
\end{equation}
Substituting back into $\bff$ yields series in $O(t^{12})$, confirming
the calculations. This solution could be used as the initial predictor
in a homotopy beginning at the singular configuration.

In contrast to the small Apollonius circle problem,
this example is computationally more challenging,
as covered in~\cite{Dur98}, \cite{MPT01}, \cite{Sot11}, and~\cite{ST08}.
It illustrates the combination of tropical methods in
computer algebra with symbolic-numeric power series computations
to define a polynomial homotopy to track solution paths starting
at multiple solutions.

\subsection{Series Developments for Cyclic 8-Roots}

A vector $\bfu \in \CC^n$ of a unitary matrix $A$ is biunimodular
if for $k=1, 2, \ldots, n$: $|u_k| = 1$ and $|v_k| = 1$ 
for $\bfv = A \bfu$.
The following system arises in the study~\cite{FR15}
of biunimodular vectors:

\begin{equation}
  \bff(\x) = 
  \left\{
   \begin{array}{c}
     x_{0}+x_{1}+ \cdots +x_{n-1}=0 \\
     i = 2, 3, 4, \ldots, n-1: 
      \displaystyle\sum_{j=0}^{n-1} ~ \prod_{k=j}^{j+i-1}
      x_{k~{\rm mod}~n}=0 \\
     x_{0}x_{1}x_{2} \cdots x_{n-1} - 1 = 0. \\
    \end{array}
 \right.
\end{equation}

Cyclic 8-roots has solution curves not reported by Backelin~\cite{Bac89}.
Note that because of the last equation, the system has no
solution for $x_0=0$, or in other words $\VV(\bff_{\aug})=\emptyset$. Thus
we are in the third case of Lemma~\ref{startingScenarios}.

In~\cite{AV12,AV13}, the vector $\bfv = (1, -1, 0, 1, 0, 0, -1, 0)$
gives the leading exponents of the series.
The corresponding unimodular coordinate transformation $\x = \z^M$ is
\begin{equation}
  M = 
  \left[
    \begin{array}{rrrrrrrr}
       1 & -1 & 0 & 1 & 0 & 0 & -1 & 0 \\
       0 & 1 & 0 & 0 & 0 & 0 & 0 & 0 \\
       0 & 0 & 1 & 0 & 0 & 0 & 0 & 0 \\
       0 & 0 & 0 & 1 & 0 & 0 & 0 & 0 \\
       0 & 0 & 0 & 0 & 1 & 0 & 0 & 0 \\
       0 & 0 & 0 & 0 & 0 & 1 & 0 & 0 \\
       0 & 0 & 0 & 0 & 0 & 0 & 1 & 0 \\
       0 & 0 & 0 & 0 & 0 & 0 & 0 & 1 \\
    \end{array}
 \right]
 \quad
 \begin{array}{l}
    x_{0} \rightarrow z_{0} \\
    x_{1} \rightarrow z_{1} z_{0}^{-1} \\
    x_{2} \rightarrow z_{2} \\
    x_{3} \rightarrow z_{3} z_{0} \\
    x_{4} \rightarrow z_{4} \\
    x_{5} \rightarrow z_{5} \\
    x_{6} \rightarrow z_{6} z_{0}^{-1} \\
    x_{7} \rightarrow z_{7}.
\end{array}
\end{equation}
Solving the transformed system with $z_0$ set to $0$ gives
the leading coefficient of the series.

After 2 Newton steps, invoked in PHCpack with {\tt phc -u},
the series for $z_1$ is

{\small
\begin{verbatim}
(-1.25000000000000E+00 + 1.25000000000000E+00*i)*z0^2
+( 5.00000000000000E-01 - 2.37676980513323E-17*i)*z0
+(-5.00000000000000E-01 - 5.00000000000000E-01*i);
\end{verbatim}
}

After a third step, the series for $z_1$ is

{\small
\begin{verbatim}
( 7.12500000000000E+00 + 7.12500000000000E+00*i)*z0^4
+(-1.52745512076048E-16 - 4.25000000000000E+00*i)*z0^3
+(-1.25000000000000E+00 + 1.25000000000000E+00*i)*z0^2
+( 5.00000000000000E-01 - 1.45255178343636E-17*i)*z0
+(-5.00000000000000E-01 - 5.00000000000000E-01*i);
\end{verbatim}
}

Bounds on the degree of the Puiseux series expansion
to decide whether a point is isolated are derived in~\cite{HSJ16}.
While the explicit bounds (which can be computed without prior
knowledge of the degrees of the solution curves) are large,
the test of whether a point is isolated can still be
performed efficiently with our quadratically convergent Newton's method.

In a future work, we plan to apply the power series methods to
the cyclic 16-roots problem, the 16-dimensional version of this
polynomial system, for which the tropical prevariety was computed
recently~\cite{JSV17}.



\begin{thebibliography}{10}

\bibitem{AV12}
D.~Adrovic and J.~Verschelde.
\newblock Computing {P}uiseux series for algebraic surfaces.
\newblock In J.~van~der Hoeven and M.~van Hoeij, editors, {\em Proceedings of
  the 37th International Symposium on Symbolic and Algebraic Computation (ISSAC
  2012)}, pages 20--27. ACM, 2012.

\bibitem{AV13}
D.~Adrovic and J.~Verschelde.
\newblock Polyhedral methods for space curves exploiting symmetry applied to
  the cyclic $n$-roots problem.
\newblock In V.P. Gerdt, W.~Koepf, E.W. Mayr, and E.V. Vorozhtsov, editors,
  {\em Computer Algebra in Scientific Computing, 15th International Workshop,
  CASC 2013, Berlin, Germany}, volume 8136 of {\em Lecture Notes in Computer
  Science}, pages 10--29, 2013.

\bibitem{AG03}
E.~L. Allgower and K.~Georg.
\newblock {\em Introduction to Numerical Continuation Methods}, volume~45 of
  {\em Classics in Applied Mathematics}.
\newblock SIAM, 2003.

\bibitem{ACGS04}
A.~Amiraslani, R.~M. Corless, L.~Gonzalez-Vega, and A.~Shakoori.
\newblock Polynomial algebra by values.
\newblock Technical report, Ontario Research Centre for Computer Algebra, 2004.

\bibitem{Bac89}
J.~Backelin.
\newblock Square multiples n give infinitely many cyclic n-roots.
\newblock Reports, Matematiska Institutionen~8, Stockholms universitet, 1989.

\bibitem{BG96}
G.~A. Baker and P.~Graves-Morris.
\newblock {\em {Pad{\'{e}} Approximants}}, volume~59 of {\em Encyclopedia of
  Mathematics and its Applications}.
\newblock Cambridge University Press, 2nd edition, 1996.

\bibitem{BM01}
D.~A. Bini and B.~Meini.
\newblock Solving block banded block {T}oeplitz systems with structured blocks:
  Algorithms and applications.
\newblock In D.~A. Bini, E.~Tyrtyshnikov, and P.~Yalamov, editors, {\em
  Structured Matrices}, pages 21--41. Nova Science Publishers, Inc., Commack,
  NY, USA, 2001.

\bibitem{BHS08}
T.~Bogart, M.~Hampton, and W.~Stein.
\newblock {\tt groebner\_fan} module of {S}age.
\newblock {The Sage Development Team, 2008.}

\bibitem{BJSST07}
T.~Bogart, A.~N. Jensen, D.~Speyer, B.~Sturmfels, and R.~R. Thomas.
\newblock Computing tropical varieties.
\newblock {\em Journal of Symbolic Computation}, 42(1):54--73, 2007.

\bibitem{BMWW04}
A.~Bompadre, G.~Matera, R.~Wachenchauzer, and A.~Waissbein.
\newblock Polynomial equation solving by lifting procedures for ramified
  fibers.
\newblock {\em Theoretical Computer Science}, 315(2-3):335--369, 2004.

\bibitem{CPHM01}
D.~Castro, L.~M. Pardo, K.~H{\"{a}}gele, and J.~E. Morais.
\newblock Kronecker's and {N}ewton's approaches to solving: A first comparison.
\newblock {\em Journal of Complexity}, 17(1):212--303, 2001.

\bibitem{CV10}
A.~Chesnokov and M.~Van~Barel.
\newblock A direct method to solve block banded block {T}oeplitz systems with
  non-banded {T}oeplitz blocks.
\newblock {\em Journal of Computational and Applied Mathematics},
  234(5):1485--1491, 2010.

\bibitem{DGPS}
W.~Decker, G.-M. Greuel, G.~Pfister, and H.~Sch\"onemann.
\newblock {\sc Singular} {4-1-0} --- {A} computer algebra system for polynomial
  computations.
\newblock \url{http://www.singular.uni-kl.de}, 2016.

\bibitem{Dur98}
C.~Durand.
\newblock {\em Symbolic and Numerical Techniques for Constraint Solving}.
\newblock PhD thesis, Purdue University, 1998.

\bibitem{FR15}
H.~F{\"{u}}hr and Z.~Rzeszotnik.
\newblock On biunimodular vectors for unitary matrices.
\newblock {\em Linear Algebra and its Applications}, 484:86--129, 2015.

\bibitem{GCL92}
K.~O. Geddes, S.~R. Czapor, and G.~Labahn.
\newblock {\em Algorithms for Computer Algebra}.
\newblock Kluwer Academic Publishers, 1992.

\bibitem{HKPSW00}
J.~Heintz, T.~Krick, S.~Puddu, J.~Sabia, and A.~Waissbein.
\newblock Deformation techniques for efficient polynomial equation solving.
\newblock {\em Journal of Complexity}, 16(1):70--109, 2000.

\bibitem{HSJ16}
M.~I. Herrero, G.~Jeronimo, and J.~Sabia.
\newblock Puiseux expansions and nonisolated points in algebraic varieties.
\newblock {\em Communications in Algebra}, 44(5):2100--2109, 2016.

\bibitem{JSV17}
A.~Jensen, J.~Sommars, and J.~Verschelde.
\newblock Computing tropical prevarieties in parallel.
\newblock In H.-W. Loidl, M.~Monagan, and J.-C. Faug{\`{e}}re, editors, {\em
  Proceedings of the International Workshop on Parallel Symbolic Computation
  (PASCO 2017)}. ACM, 2017.

\bibitem{Jen08}
A.~N. Jensen.
\newblock Computing {G}r{\"{o}}bner fans and tropical varieties in {G}fan.
\newblock In M.E. Stillman, N.~Takayama, and J.~Verschelde, editors, {\em
  Software for Algebraic Geometry}, volume 148 of {\em The IMA Volumes in
  Mathematics and its Applications}, pages 33--46. Springer-Verlag, 2008.

\bibitem{JMM08}
A.~N. Jensen, H.~Markwig, and T.~Markwig.
\newblock An algorithm for lifting points in a tropical variety.
\newblock {\em Collectanea Mathematica}, 59(2):129--165, 2008.

\bibitem{JMSW09}
G.~Jeronimo, G.~Matera, P.~Solern{\'{o}}, and A.~Waissbein.
\newblock Deformation techniques for sparse systems.
\newblock {\em Found.\ Comput.\ Math.}, 9:1--50, 2009.

\bibitem{Kat08}
E.~Katz.
\newblock A tropical toolkit.
\newblock {\em Expositiones Mathematicae}, 27:1--36, 2009.

\bibitem{KT74}
H.~T. Kung and J.~F. Traub.
\newblock Optimal order of one-point and multipoint iteration.
\newblock {\em Journal of the Association of Computing Machinery},
  21(4):643--651, 1974.

\bibitem{MPT01}
I.~G. Macdonald, J.~Pach, and T.~Theobald.
\newblock Common tangents to four unit balls in $\rr^3$.
\newblock {\em Discrete and Computational Geometry}, 26(1):1--17, 2001.

\bibitem{MS15}
D.~Maclagan and B.~Sturmfels.
\newblock {\em Introduction to Tropical Geometry}, volume 161 of {\em Graduate
  Studies in Mathematics}.
\newblock American Mathematical Society, 2015.

\bibitem{Mor87}
A.~Morgan.
\newblock {\em Solving polynomial systems using continuation for engineering
  and scientific problems}, volume~57 of {\em Classics in Applied Mathematics}.
\newblock SIAM, 2009.

\bibitem{Pay09}
S.~Payne.
\newblock Fibers of tropicalization.
\newblock {\em Mathematische Zeitschrift}, 262:301--311, 2009.

\bibitem{PR12}
A.~Poteaux and M.~Rybowicz.
\newblock Good reduction of {P}uiseux series and applications.
\newblock {\em Journal of Symbolic Computation}, 47(1):32--63, 2012.

\bibitem{Sot11}
F.~Sottile.
\newblock {\em Real Solutions to Equations from Geometry}, volume~57 of {\em
  University Lecture Series}.
\newblock AMS, 2011.

\bibitem{ST08}
F.~Sottile and T.~Theobald.
\newblock Line problems in nonlinear computational geometry.
\newblock In J.E. Goodman, J.~Pach, and R.~Pollack, editors, {\em Computational
  Geometry - Twenty Years Later}, pages 411--432. AMS, 2008.

\bibitem{Sage}
W.\thinspace{}A. Stein et~al.
\newblock {\em {S}age {M}athematics {S}oftware ({V}ersion 6.9)}.
\newblock The Sage Development Team, 2015.
\newblock {\tt http://www.sagemath.org}.

\bibitem{Ver99}
J.~Verschelde.
\newblock Algorithm 795: {PHC}pack: A general-purpose solver for polynomial
  systems by homotopy continuation.
\newblock {\em ACM Trans. Math. Softw.}, 25(2):251--276, 1999.

\bibitem{Ver14}
J.~Verschelde.
\newblock Modernizing {PHC}pack through phcpy.
\newblock In P.~de~Buyl and N.~Varoquaux, editors, {\em Proceedings of the 6th
  European Conference on Python in Science (EuroSciPy 2013)}, pages 71--76,
  2014.

\end{thebibliography}
\end{document}